\title{Twin-width can be exponential in treewidth}
\titlerunning{Twin-width can be exponential in treewidth}
\author{\'{E}douard Bonnet}{Univ Lyon, CNRS, ENS de Lyon, Université Claude Bernard Lyon 1, LIP UMR5668, France \and \url{http://perso.ens-lyon.fr/edouard.bonnet/}}{edouard.bonnet@ens-lyon.fr}{https://orcid.org/0000-0002-1653-5822}{}
\author{Hugues Déprés}{Univ Lyon, CNRS, ENS de Lyon, Université Claude Bernard Lyon 1, LIP UMR5668, France \and \url{http://perso.ens-lyon.fr/hugues.depres/}}{hugues.depres@ens-lyon.fr}{}{}
\authorrunning{\'E. Bonnet, H. Déprés}
\newtheorem*{rep@theorem}{\rep@title}
\newcommand{\newreptheorem}[2]{%
\newenvironment{rep#1}[1]{%
 \def\rep@title{#2 \ref{##1}}%
 \begin{rep@theorem}}%
 {\end{rep@theorem}}}
\tikzset{draw half paths/.style 2 args={%
  decoration={show path construction,
    lineto code={
      \draw [#1] (\tikzinputsegmentfirst) -- 
         ($(\tikzinputsegmentfirst)!0.5!(\tikzinputsegmentlast)$);
      \draw [#2] ($(\tikzinputsegmentfirst)!0.5!(\tikzinputsegmentlast)$)
        -- (\tikzinputsegmentlast);
    }
  }, decorate
}}
\renewcommand{\geq}{\geqslant}
\renewcommand{\leq}{\leqslant}
\newcommand{\prel}{preleaf\xspace}
\newcommand{\prels}{preleaves\xspace}
\newtheorem{question}{Question}
\newenvironment{proofofclaim}{\noindent \textsc{Proof of the Claim:}}{\hfill$\Diamond$\medskip}
\newcommand{\tww}{\text{tww}}
\newcommand{\otww}{\text{otww}}
\newcommand{\tw}{\text{tw}}
\newcommand{\gn}{\text{gn}}
\newcommand{\mxn}{\text{mxn}}
\newcommand{\fvs}{\text{fvs}}
\renewcommand{\P}{{\mathcal P}}
\definecolor{darkblue}{rgb}{0.0, 0.0, 0.50}
\newcommand\abs[1]{\lvert #1\rvert}
\begin{document}

\maketitle

\begin{abstract}
  For any small positive real $\varepsilon$ and integer $t > \frac{1}{\varepsilon}$, we build a graph with a vertex deletion set of size~$t$ to~a~tree, and twin-width greater than~$2^{(1-\varepsilon) t}$.
  In particular, this shows that the twin-width is sometimes exponential in the treewidth, in the so-called oriented twin-width and grid number, and that adding an apex may multiply the twin-width by at least $2-\varepsilon$.
 Except for the one in oriented twin-width, these lower bounds are essentially tight. 
\end{abstract}

\section{Introduction}\label{sec:intro}

Twin-width is a graph parameter introduced by Bonnet, Kim, Thomassé, and Watrigant~\cite{twin-width1}.
It is defined by means of trigraphs.
A~\emph{trigraph} is a graph with some edges colored black, and some colored red.
A~(vertex) \emph{contraction} consists of merging two (non-necessarily adjacent) vertices, say, $u, v$ into a~vertex~$w$, and keeping every edge $wz$ black if and only if $uz$ and $vz$ were previously black edges.
The other edges incident to $w$ become red (if not already), and the rest of the trigraph remains the same.
A~\emph{contraction sequence} of an $n$-vertex graph $G$ is a sequence of trigraphs $G=G_n, \ldots, G_1=K_1$ such that $G_i$ is obtained from $G_{i+1}$ by performing one contraction.
A~\mbox{\emph{$d$-sequence}} is a contraction sequence in which every vertex of every trigraph has at most $d$ red edges incident to it.
The~\emph{twin-width} of $G$, denoted by $\tww(G)$, is then the minimum integer~$d$ such that $G$ admits a $d$-sequence.
\Cref{fig:contraction-sequence} gives an example of a graph with a 2-sequence, i.e., of twin-width at most~2.
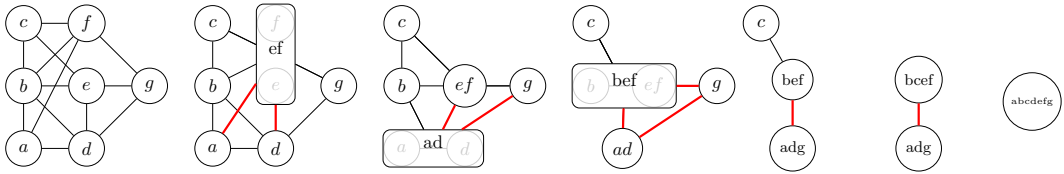
\begin{figure}[h!]
  \centering
  \resizebox{400pt}{!}{
  \begin{tikzpicture}[
      vertex/.style={circle, draw, minimum size=0.68cm}
    ]
    \def\s{1.2}
    \foreach \i/\j/\l in {0/0/a,0/1/b,0/2/c,1/0/d,1/1/e,1/2/f,2/1/g}{
      \node[vertex] (\l) at (\i * \s,\j * \s) {$\l$} ;
    }
    \foreach \i/\j in {a/b,a/d,a/f,b/c,b/d,b/e,b/f,c/e,c/f,d/e,d/g,e/g,f/g}{
      \draw (\i) -- (\j) ;
    }

    \begin{scope}[xshift=3 * \s cm]
    \foreach \i/\j/\l in {0/0/a,0/1/b,0/2/c,1/0/d,2/1/g}{
      \node[vertex] (\l) at (\i * \s,\j * \s) {$\l$} ;
    }
    \foreach \i/\j/\l in {1/1/e,1/2/f}{
      \node[vertex,opacity=0.2] (\l) at (\i * \s,\j * \s) {$\l$} ;
    }
    \node[draw,rounded corners,inner sep=0.01cm,fit=(e) (f)] (ef) {ef} ;
    \foreach \i/\j in {a/b,a/d,b/c,b/d,b/ef,c/ef,c/ef,d/g,ef/g,ef/g}{
      \draw (\i) -- (\j) ;
    }
    \foreach \i/\j in {a/ef,d/ef}{
      \draw[red, very thick] (\i) -- (\j) ;
    }
    \end{scope}

    \begin{scope}[xshift=6 * \s cm]
    \foreach \i/\j/\l in {0/1/b,0/2/c,2/1/g,1/1/ef}{
      \node[vertex] (\l) at (\i * \s,\j * \s) {$\l$} ;
    }
    \foreach \i/\j/\l in {0/0/a,1/0/d}{
      \node[vertex,opacity=0.2] (\l) at (\i * \s,\j * \s) {$\l$} ;
    }
    \draw[opacity=0.2] (a) -- (d) ;
    \node[draw,rounded corners,inner sep=0.01cm,fit=(a) (d)] (ad) {ad} ;
    \foreach \i/\j in {ad/b,b/c,b/ad,b/ef,c/ef,c/ef,ef/g,ef/g}{
      \draw (\i) -- (\j) ;
    }
    \foreach \i/\j in {ad/ef,ad/g}{
      \draw[red, very thick] (\i) -- (\j) ;
    }
    \end{scope}

    \begin{scope}[xshift=9 * \s cm]
    \foreach \i/\j/\l in {0/2/c,2/1/g,0.5/0/ad}{
      \node[vertex] (\l) at (\i * \s,\j * \s) {$\l$} ;
    }
    \foreach \i/\j/\l in {0/1/b,1/1/ef}{
      \node[vertex,opacity=0.2] (\l) at (\i * \s,\j * \s) {$\l$} ;
    }
    \draw[opacity=0.2] (b) -- (ef) ;
    \node[draw,rounded corners,inner sep=0.01cm,fit=(b) (ef)] (bef) {bef} ;
    \foreach \i/\j in {ad/bef,bef/c,bef/ad,c/bef,c/bef,bef/g}{
      \draw (\i) -- (\j) ;
    }
    \foreach \i/\j in {ad/bef,ad/g,bef/g}{
      \draw[red, very thick] (\i) -- (\j) ;
    }
    \end{scope}

    \begin{scope}[xshift=11.7 * \s cm]
    \foreach \i/\j/\l in {0/2/c}{
      \node[vertex] (\l) at (\i * \s,\j * \s) {$\l$} ;
    }
     \foreach \i/\j/\l in {0.5/0/adg,0.5/1.1/bef}{
      \node[vertex] (\l) at (\i * \s,\j * \s) {\footnotesize{\l}} ;
    }
    \foreach \i/\j in {c/bef}{
      \draw (\i) -- (\j) ;
    }
    \foreach \i/\j in {adg/bef}{
      \draw[red, very thick] (\i) -- (\j) ;
    }
    \end{scope}

    \begin{scope}[xshift=13.7 * \s cm]
    \foreach \i/\j/\l in {0.5/0/adg,0.5/1.1/bcef}{
      \node[vertex] (\l) at (\i * \s,\j * \s) {\footnotesize{\l}} ;
    }
    \foreach \i/\j in {adg/bcef}{
      \draw[red, very thick] (\i) -- (\j) ;
    }
    \end{scope}

    \begin{scope}[xshift=15 * \s cm]
    \foreach \i/\j/\l in {1/0.75/abcdefg}{
      \node[vertex] (\l) at (\i * \s,\j * \s) {\tiny{\l}} ;
    }
    \end{scope}
    
  \end{tikzpicture}
  }
  \caption{A 2-sequence witnessing that the initial graph has twin-width at most~2.}
  \label{fig:contraction-sequence}
\end{figure}
Twin-width can be naturally extended to matrices (unordered~\cite{twin-width1} or ordered~\cite{twin-width4}) over a finite alphabet, and hence to any binary structures. 
Classes of binary structures with bounded twin-width include graphs with bounded treewidth, bounded clique-width, $K_t$-minor free graphs, posets with antichains of bounded size, strict subclasses of permutation graphs, map graphs, bounded-degree string graphs~\cite{twin-width1}, segment graphs with no $K_{t,t}$ subgraph, visibility graphs of 1.5D terrains without large half-graphs, visibility graphs of simple polygons without large independent sets~\cite{twin-width8}, as well as $\Omega(\log n)$-subdivisions of $n$-vertex graphs, classes with bounded queue number or bounded stack number, and some classes of cubic expanders~\cite{twin-width2}.

Despite their apparent generality, classes of bounded twin-width are small~\cite{twin-width2}, $\chi$-bounded~\cite{twin-width3}, even quasi-polynomially $\chi$-bounded~\cite{PilipczukS22}, preserved (albeit with a higher upper bound) by first-order transductions~\cite{twin-width1}, and by the usual graph products when one graph has bounded degree~\cite{Pettersson22,twin-width2}, have VC density~1~\cite{tww-polyker,Przybyszewski22}, admit, when $O(1)$-sequences are given, a fixed-parameter tractable first-order model checking~\cite{twin-width1}, an (almost) single-exponential parameterized algorithm for various problems that are W$[1]$-hard in general~\cite{twin-width3}, as well as a~parameterized fully-polynomial linear algorithm for counting triangles~\cite{Kratsch22}, an (almost) linear representation~\cite{PilipczukSZ22}, a stronger regularity lemma~\cite{Przybyszewski22}, etc. 

In all these applications, the upper bound on twin-width, although somewhat hidden in~the previous paragraph, plays a role.
There is then an incentive to obtain as low as possible upper bounds on particular classes of bounded twin-width.
To give one concrete algorithmic example, an independent set of size $k$ can be found in time $O(k^2 d^{2k} n)$ in an $n$-vertex graph given with a $d$-sequence~\cite{twin-width3}.
This is relatively practical for moderate values of $k$, with the guarantee that $d$ is below 10, but not when $d$ is merely upperbounded by $10^{10}$.   
Another motivating example: triangle-free graphs of twin-width at most~$d$ are $d+2$-colorable~\cite{twin-width3}, a~stronger fact in the former case than in the latter.

In that line of work, Balab\'an and Hlinen\'y show that posets of width $k$ (i.e., with antichains of size at most~$k$) have twin-width at most $9k$~\cite{Balaban21}.
Unit interval graphs have twin-width at most~2~\cite{twin-width3}, and proper $k$-mixed-thin graphs (a recently proposed generalization of unit interval graphs) have twin-width $O(k)$~\cite{Balaban22}.
Every graph obtained by subdividing at least $2 \log n$ (throughout the paper, all logs are in base 2) times each edge of an $n$-vertex graph has twin-width at most~4~\cite{Berge21}.
Schidler and Szeider report the (exact) twin-width of a collection of graphs \cite{Schidler21}, obtained via SAT encodings.
Jacob and Pilipczuk~\cite{Jacob22} give the current best upper bound of 183 on the twin-width of planar graphs, while graphs with genus~$g$ have twin-width $O(g)$~\cite{reduced-bdw}.
Most relevant to our paper, for every graph $G$, $\tww(G) \leqslant 3 \cdot 2^{\tw(G)-1}$~\cite{Jacob22}, where $\tw(G)$ denotes the treewidth of~$G$.

Conversely, one may ask the following.
\begin{question}\label{q:tw}
  What is the largest twin-width a graph of treewidth $k$ can have? 
\end{question}
A lower bound of $\Omega(k)$ comes from the existence of $n$-vertex graphs with twin-width $\Omega(n)$ (since the treewidth is trivially upperbounded by $n-1$).
This is almost surely the case of graphs drawn from $G(n,1/2)$.
Alternatively, the $n$-vertex Paley graph (for a prime $n$ such that $n \equiv 1 \mod 4$) has precisely twin-width $(n-1)/2$~\cite{Ahn22}.
Another example to derive the linear lower bound is the power set graph~\cite{Jacob22}.
Improving on this lower bound is not obvious, and $\Theta(k)$ is indeed the answer to~\cref{q:tw} within the class of planar graphs~\cite{Jacob22}, or when replacing 'treewidth' by 'cliquewidth' or 'pathwidth.'

When switching 'twin-width' and 'treewidth' in~\cref{q:tw}, the gap is basically as large as possible:
There are $n$-vertex graphs with treewidth~$\Omega(n)$ and twin-width at most~6, in the iterated 2-lifts of $K_4$~\cite{twin-width2,Bilu06}.

An important characterization of bounded twin-width is via the absence of complex divisions of an adjacency matrix.
A matrix has a \emph{$k$-mixed minor} if its row (resp. column) set can be partitioned into $k$ sets of consecutive rows (resp. columns), such that each of the $k^2$ cells defined by this \emph{$k$-division} contains at least two distinct rows and at least two distinct columns.
The \emph{mixed number of a matrix $M$} is the largest integer $k$ such that $M$ admits a $k$-mixed minor. 
The \emph{mixed number of a graph $G$}, denoted by $\mxn(G)$, is the minimum, taken among all the adjacency matrices $M$ of $G$, of the mixed number of $M$.
The following was shown.
\begin{theorem}[\cite{twin-width1}]\label{thm:mxn}
For every graph $G$, $(\mxn(G) - 1)/2 \leqslant \tww(G) \leqslant 2^{2^{O(\mxn(G))}}$.
\end{theorem}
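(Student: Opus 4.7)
The plan is to prove the two inequalities of Theorem~\ref{thm:mxn} separately, as they are of quite different nature.

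For the lower bound $(\mxn(G)-1)/2 \leq \tww(G)$: given a $d$-sequence $G_n, \ldots, G_1$, I would build a rooted binary contraction tree $T$ whose leaves are $V(G)$ and whose internal nodes encode the contractions. Ordering $V(G)$ by an in-order traversal of $T$ produces an ordering $\sigma$ such that, at every step $i$, the $i$ current parts are $\sigma$-intervals. The goal is then to show that the adjacency matrix $M_\sigma$ has no $(2d+2)$-mixed minor. Assuming for contradiction a $k$-mixed minor with row intervals $R_1, \ldots, R_k$ and column intervals $C_1, \ldots, C_k$, each mixed cell $(R_i, C_j)$ witnesses a pair of rows $r \neq r' \in R_i$ whose adjacencies to some $c \in C_j$ differ; I would trace this pair through the contraction sequence and identify the step at which $r$ and $r'$ first land in the same part, which necessarily creates a red edge in the trigraph between the newly merged part and the part containing $c$. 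A careful accounting argument, leveraging the fact that at most two cells can share the same ``witness red edge'' (since each part lies in a unique row-class and a unique column-class), then compares $k^2/2$ forced red edges against the red-degree budget $dt/2$ at the appropriate step of the sequence, yielding $k \leq 2d+1$.

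For the upper bound $\tww(G) \leq 2^{2^{O(\mxn(G))}}$: I would start from an ordering for which $M_\sigma$ has no $k$-mixed minor and construct a contraction sequence of red-degree $2^{2^{O(k)}}$. The main engine is a Marcus--Tardos-type structural result: a matrix avoiding a $k$-mixed minor admits, at every scale, a partition of its rows (and symmetrically of its columns) into $2^{O(k)}$ intervals such that the resulting coarse matrix has bounded ``diversity,'' meaning only $2^{O(k)}$ distinct row-types appear per column-block. I would then merge rows within each type and iterate across the $O(\log n)$ scales, showing that red edges can only survive between cells of ``incompatible'' types. Tracking how single-exponentially many types at each scale interact multiplicatively across scales gives the double-exponential bound on red-degree.

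The hard part is the upper bound, specifically proving the Marcus--Tardos-type structure theorem with sufficiently tight quantitative control on the number of row-types, and then carefully tracking how red edges generated at finer scales accumulate at coarser ones so that the red-degree never exceeds $2^{2^{O(k)}}$. The lower bound, in contrast, is essentially an accounting exercise once the in-order ordering from the contraction tree is fixed, although obtaining the sharp constants $(\mxn-1)/2$ (rather than $\mxn - O(1)$) still requires care in selecting which step of the sequence to pit against the mixed minor.
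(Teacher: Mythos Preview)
The paper does \emph{not} prove Theorem~\ref{thm:mxn}; it is quoted verbatim as a known result from~\cite{twin-width1} and used only as background to motivate Questions~\ref{q:mxn-gn} and~\ref{q:otww}. There is therefore nothing in this paper to compare your proposal against.

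For what it is worth, your sketch is broadly consonant with the original proof in~\cite{twin-width1}. The lower bound there is obtained exactly by ordering the vertices via a traversal of the contraction tree so that every part at every step is an interval, and then arguing that a large mixed minor would force excessive red degree at some stage of the sequence. The upper bound in~\cite{twin-width1} is indeed driven by a Marcus--Tardos argument, though the actual mechanism is somewhat different from what you outline: one shows that a $k$-mixed-free matrix of side $n$ has at most $c_k n$ \emph{non-constant zones} in any regular $\lfloor n/k \rfloor$-division, and iterates this to build a contraction sequence; the double exponential comes from the Marcus--Tardos constant $c_k = 2^{O(k)}$ feeding into an exponential blow-up when converting the zone bound into a red-degree bound. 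Your description in terms of ``row-types per column-block'' and ``accumulation across $O(\log n)$ scales'' is not quite how the argument is organised there, and your accounting for the lower bound (the $k^2/2$ versus $dt/2$ comparison) is too loose as stated to yield the sharp constant, but the overall strategy is the right one.
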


In sparse graphs (here, excluding a fixed $K_{t,t}$ as a subgraph), the previous theorem is both simpler to formulate and has a better dependency.
A matrix has a \emph{$k$-grid minor} if it has a \emph{$k$-division} with at least one 1-entry in each of its $k^2$ cells.
The \emph{grid number} of a matrix and of a graph $G$, denoted by $\gn(G)$, are defined analogously to the previous paragraph.
We only state the inequality that is useful to bound the twin-width of a sparse class, but is valid in general.
\begin{theorem}[follows from \cite{twin-width1}]\label{thm:gn}
  For every graph $G$, $\tww(G) \leqslant 2^{O(\gn(G))}$.
\end{theorem}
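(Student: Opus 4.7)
The plan is to extract from \cite{twin-width1} the direct contraction-sequence construction for 0/1 matrices of bounded grid number, which yields the single-exponential bound without routing through $\mxn(G)$ (the route via~\cref{thm:mxn} would only give the weaker $2^{2^{O(\mxn(G))}}$ estimate). Fix an adjacency matrix $M$ of $G$ realizing $\gn(M) = k$. The goal is to build a contraction sequence of $G$ whose every trigraph has maximum red degree $2^{O(k)}$.

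The construction is iterative. Maintain a symmetric partition $\mathcal{P}$ of $V(G)$; the corresponding trigraph has $\mathcal{P}$ as its vertex set, and a pair $\{P,Q\}$ carries a red edge iff the cell $P \times Q$ of the $\mathcal{P} \times \mathcal{P}$ division of $M$ contains both a $0$-entry and a $1$-entry (a \emph{mixed} cell). Each step merges two parts of $\mathcal{P}$. The key input is the grid-pattern form of Marcus--Tardos used in \cite{twin-width1}: a 0/1 matrix avoiding the $(k+1)$-grid minor contains, on average over any sufficiently balanced division, at most $c_k = 2^{O(k)}$ ones per row (and per column) block. Applied to a $t \times t$ division, this gives at most $c_k t$ non-zero blocks in total, hence at most $c_k$ non-zero blocks per row of the division on average.

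At each step I would select a pair $P, Q \in \mathcal{P}$ whose merger introduces few new mixed cells along the row and column of $P \cup Q$. The density bound guarantees such a pair exists, and furthermore forces the row (and column) of $P \cup Q$ in the refined division to contain at most $2^{O(k)}$ mixed cells, which bounds the red degree of the newly created vertex. A standard amortization on the total potential $\Phi(\mathcal{P}) = \#\{\text{mixed cells in the } \mathcal{P}\text{-division}\}$ propagates the bound: merging $P$ and $Q$ can only merge or eliminate mixed cells elsewhere, so no other vertex sees its red degree exceed $2^{O(k)}$ throughout the sequence.

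The main obstacle is to simultaneously control the red degree of every vertex at every step, not just that of the vertex just produced; this is the charging argument of \cite{twin-width1}. Its decisive leverage is that $M$ is 0/1, so a grid cell already captures the full information of the 0/1 pattern of its rows (up to emptiness), whereas a mixed division over arbitrary matrices must encode all possible row patterns, costing an extra exponential. This is exactly what collapses the double exponential of \cref{thm:mxn} into the claimed $2^{O(\gn(G))}$.
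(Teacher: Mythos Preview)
The paper does not supply its own proof of \cref{thm:gn}; it is stated with the attribution ``follows from \cite{twin-width1}'' and immediately used. So there is nothing in the paper to compare your sketch against, and your write-up is really a proposed reconstruction of the argument in \cite{twin-width1}.

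On its own merits, your sketch has the right ingredients (Marcus--Tardos with the $2^{O(k)}$ constant, plus an iterative merge), but two points are genuine gaps rather than omitted details. First, you never require the parts of $\mathcal P$ to be \emph{intervals} in the fixed order of $M$. Marcus--Tardos and the notion of a ``division'' only make sense for blocks of consecutive rows/columns; for an arbitrary partition there is no quotient $0/1$ matrix to which the theorem applies, and no reason the grid-minor-free hypothesis transfers. The argument in \cite{twin-width1} crucially keeps all parts as intervals and only merges \emph{consecutive} parts. Second, your amortisation claim is false as stated: merging $P$ and $Q$ can \emph{create} a mixed cell $(P\cup Q,R)$ from two homogeneous cells (one all-$0$, one all-$1$), hence can raise the red degree of $R$. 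So ``no other vertex sees its red degree exceed $2^{O(k)}$'' does not follow from the potential $\Phi$ you propose. The actual proof controls this by (i) halving the number of interval parts in rounds rather than one merge at a time, and (ii) using the Marcus--Tardos density bound to show that, in each round, one can pair up consecutive intervals so that every resulting row (hence every vertex) still has at most $2^{O(k)}$ non-constant cells. Without the interval structure and the round-by-round pairing, the step ``such a pair exists and every red degree stays bounded'' is not justified.
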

\cref{thm:mxn,thm:gn} allow to bound the twin-width of a class $\mathcal C$ by exhibiting, for every $G \in \mathcal C$, an adjacency matrix of $G$ without large mixed or grid minor.
Therefore one merely has to order $V(G)$ (the vertex set of $G$) in an appropriate way.
The double (resp.~simple) exponential dependency in mixed number (resp.~grid number) implies relatively weak twin-width upper bounds.
For several classes whose twin-width was originally upperbounded via~\cref{thm:mxn}, better bounds were later given by avoiding this theorem (see~\cite{twin-width2,Balaban21,Jacob22,reduced-bdw,Berge21}).
Still for some geometric graph classes, bypassing~\cref{thm:mxn} seems complicated (see~\cite{twin-width8}).
And in general (since this theorem is at the basis of several other applications, see for instance~\cite{twin-width2,twin-width3,twin-width4}) it would help to have an improved upper bound of $\tww(G)$; in particular a negative answer to the following question.

\begin{question}\label{q:mxn-gn}
  Is twin-width sometimes exponential in mixed and grid number?
\end{question}

A variant of twin-width, called \emph{oriented twin-width}, adds an orientation to the red edges (see~\cite{twin-width6}). 
The red edge (arc) is oriented away from the contracted vertex.
The \emph{oriented twin-width}~$d$ of a graph $G$, denoted by $\otww(G)$, is then defined similarly as twin-width by tolerating more than $d$ red arcs incident to a vertex, as long as at most $d$ of them are out-going.
Rather surprisingly twin-width and oriented twin-width are tied.

\begin{theorem}[\cite{twin-width6}]\label{thm:otww}
  For every graph $G$, $\otww(G) \leqslant \tww(G) \leqslant 2^{2^{O(\otww(G))}}$.
\end{theorem}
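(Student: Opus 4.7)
The plan is to prove the two inequalities separately. For $\otww(G)\leq \tww(G)$, start with any $d$-sequence for $G$ and orient it as follows: at each contraction producing a vertex $w$, orient every red arc incident to $w$ (whether genuinely new or inherited from a red edge previously incident to $u$ or $v$) away from $w$; red arcs disjoint from the contracted vertex keep their previous orientation. Since at every step the total red degree at every vertex of a $d$-sequence is at most $d$, so is the out-red-degree, yielding an oriented $d$-sequence.

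For $\tww(G)\le 2^{2^{O(\otww(G))}}$, the plan is to establish $\mxn(G) = O(\otww(G))$ and plug into Theorem~\ref{thm:mxn}: this then gives $\tww(G)\le 2^{2^{O(\mxn(G))}} = 2^{2^{O(\otww(G))}}$. Set $k := \otww(G)$, fix an oriented $k$-sequence, and let $T$ be its binary contraction tree (leaves are $V(G)$). Order $V(G)$ by a left-to-right DFS of $T$; I claim the adjacency matrix of $G$ with this row/column ordering has mixed number at most $2k$.

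Toward a contradiction, suppose there is an $m$-mixed minor with row-groups $R_1,\ldots,R_m$ and column-groups $C_1,\ldots,C_m$, each of them an interval of leaves. The crux is to locate a time $t^*$ in the sequence at which each $R_i$ is contained in a distinct part $\hat R_i$ of the partition and each $C_j$ in a distinct part $\hat C_j$. Once such $t^*$ is produced, the mixed-cell property forces $\hat R_i\hat C_j$ to be a red trigraph edge for every pair $(i,j)$, giving $m^2$ red edges between the two sides. By the oriented $k$-bound, at most $mk$ arcs point from the $\hat R$-side to the $\hat C$-side, and symmetrically at most $mk$ in reverse, so $m^2\leq 2mk$ and thus $m\leq 2k$, as claimed.

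The main obstacle is the alignment step producing $t^*$, because intervals of leaves in the DFS order need not be leaf-sets of subtrees of $T$, so a row-group may straddle several parts at every moment of the sequence. My intended remedy is first to pass to a sub-$m'$-division with $m' = \Theta(m)$ whose row- and column-groups are leaf-sets of subtrees of $T$ (via a pigeonhole on the constant-bounded number of boundary straddlers per group), and then to choose $t^*$ as the moment just before the first contraction that would merge two such subtrees belonging to distinct groups. A fallback, if this direct alignment runs into trouble, is to locally reorder contractions so that within-group contractions precede across-group ones, at the price of an extra constant factor in the out-degree bound; in either case, the eventual $O(k)$ bound on $\mxn(G)$ is robust to such constant losses and yields the stated double-exponential bound on $\tww(G)$.
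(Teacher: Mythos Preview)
The paper does not prove this theorem; it is quoted from~\cite{twin-width6}. So there is no ``paper's own proof'' to compare against, and your attempt must stand on its own.

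Your argument for $\otww(G)\le\tww(G)$ is correct: with the orientation rule you describe, the out-red-degree of any vertex $w$ is maximised at the moment $w$ is created, where it coincides with its total red degree, hence is at most~$d$.

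For the reverse inequality your overall plan (bound $\mxn(G)$ linearly in $\otww(G)$, then invoke Theorem~\ref{thm:mxn}) is the right one, and the DFS order on the contraction tree is the natural candidate ordering. The counting step ``$m^2$ red edges vs.\ $2mk$ out-arcs'' is also sound, \emph{provided} the alignment step can be made to work. That is where the genuine gap is, and neither of your proposed remedies closes it.

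Concretely, take the contraction tree to be a left-comb (caterpillar): the sequence contracts vertex $1$ with $2$, then the result with $3$, and so on. The DFS order is $1,2,\ldots,n$, and at every moment the parts are $\{1,\ldots,i\},\{i+1\},\ldots,\{n\}$. Now let $R_1=\{1,2\}$ and $R_2=\{3,4\}$. For $R_2$ to lie in a single part one must wait until the part $\{1,2,3,4\}$ forms, at which point $R_1$ and $R_2$ are already merged; so no time $t^*$ as you describe exists. Your first remedy (pass to groups that are leaf-sets of subtrees) fails here too: the only subtree leaf-sets of size~$\ge 2$ are the nested sets $\{1,\ldots,j\}$, so one cannot extract even two disjoint such groups of size~$\ge 2$. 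Your fallback (reorder contractions so within-group merges precede across-group ones) cannot help either, because $R_2=\{3,4\}$ is simply not the leaf-set of any subtree of this contraction tree, so no reordering consistent with the tree ever produces a part equal to $R_2$; and there is no reason the out-degree bound survives an arbitrary reordering that departs from the tree.

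What is actually needed is an argument that avoids a single global time $t^*$ altogether: one typically charges each mixed cell $(i,j)$ to an out-arc at a \emph{cell-dependent} moment (e.g.\ the first time one of $R_i$, $C_j$ becomes a single part) and then controls the multiplicity of the charging. The clean way to do this uses the fact that, along the DFS order, parts are always intervals, so each division boundary is straddled by at most one part at any time; combining this with a Marcus--Tardos-style or careful double-counting argument yields $\mxn(G)=O(k)$. As written, your alignment step is not salvageable by a constant-factor pigeonhole, and the proof is incomplete.
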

Classic results show that planar graphs have oriented twin-width at most~9~\cite{twin-width6}.
Thus it would be appreciable to lower the dependency of $\tww(G)$ in $\otww(G)$.

\begin{question}\label{q:otww}
 Is twin-width sometimes exponential in oriented twin-width?
\end{question}

An elementary argument shows that when adding an apex (i.e., an additional vertex with an arbitrary neighborhood) to a graph $G$, the twin-width of the obtained graph is at most $2 \cdot \tww(G) + 1$.
Again it is not clear whether this increase could be made smaller.  

\begin{question}\label{q:apex}
 Does twin-width sometimes essentially double when an apex is added? 
\end{question}

Note that~\cref{q:tw} is asked by Jacob and Pilipczuk~\cite{Jacob22}, and~\cref{q:otww} is posed by Bonnet et al.~\cite{twin-width6}, and is closely related to~\cref{q:mxn-gn}.

\paragraph*{Our contribution.}
With a single construction, we answer all these questions.
The answer to~Questions~\ref{q:mxn-gn}, \ref{q:otww}, and~\ref{q:apex} is affirmative, while the answer to~\cref{q:tw} is $2^{\Theta(k)}$, which confirms the intuition of the authors of~\cite{Jacob22}. 
More precisely, we show the following.
 
\begin{theorem}\label{thm:main}
  For every real $0 < \varepsilon \leqslant 1/2$ and integer $t>1/\varepsilon$, there is a graph $G_{t,\varepsilon}$ with a~feedback vertex set of size $t$ and such that $\tww(G_{t,\varepsilon}) > 2^{(1-\varepsilon)t}$.
\end{theorem}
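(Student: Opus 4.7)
The plan is to build $G_{t,\varepsilon}$ from $t$ apex vertices $v_1,\ldots,v_t$ together with $2^t$ ``leaves'' $\{u_b : b \in \{0,1\}^t\}$ arranged as a tree $T$ (for example a path), where the edge $u_b v_i$ is present exactly when the $i$-th bit of $b$ is $1$. Removing $\{v_1,\ldots,v_t\}$ leaves only $T$, so $\{v_1,\ldots,v_t\}$ is a feedback vertex set of size $t$; this settles the easy half of the theorem. Everything then rests on the twin-width lower bound, which is the real content.

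For the lower bound I would take an arbitrary contraction sequence $G = G_n,\ldots,G_1 = K_1$ and show that at some step $k^*$ some vertex of $G_{k^*}$ has red-degree exceeding $2^{(1-\varepsilon)t}$. The key observation is that whenever a super-vertex $U$ contains two leaves $u_b, u_{b'}$ with $b_i \ne b'_i$, there is a red edge between $U$ and the super-vertex hosting $v_i$; call $U$ \emph{$i$-split} in that case. If, at some moment, more than $2^{(1-\varepsilon)t}$ super-vertices are simultaneously $i^*$-split for a single coordinate $i^*$, then the super-vertex containing $v_{i^*}$ attains the required red-degree. The high-level proof strategy is therefore: (1)~track the evolution of the partition that the contraction sequence induces on the $2^t$ leaves; (2)~identify a well-chosen critical moment~$k^*$; and (3)~combine an entropy-style inequality (a super-vertex containing $m$ distinct leaf-types is split in at least $\log_2 m$ coordinates) with an averaging over the $t$ coordinates to pin down an $i^*$ on which very many super-vertices are split.

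The hard part will be tightening this argument from the naive bound of order $2^t/t$ (given by a one-shot pigeonhole over the $t$ coordinates) to the sharp $2^{(1-\varepsilon)t}$. Two features of the construction that I expect must be exploited are: (i)~the tree $T$, which penalises contraction sequences that merge leaves far apart in $T$ by producing additional red edges along the tree and hence restricting the adversary's freedom to ``balance'' the mixing evenly; and (ii)~an amortised analysis summed over the whole sequence, rather than at a single moment, so that red-edge mass accumulated across many steps can be pigeonholed onto a single bad trigraph. A further delicate point is that the adversary may freely interleave apex--apex, apex--leaf, and leaf--leaf contractions, and may also merge apex vertices early to ``hide'' mixing inside a single super-vertex; the proof must certify that no such reordering can simultaneously defer the bad moment, smear the mixing across coordinates, and keep the red-degree below the target. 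I anticipate that the construction itself is fairly natural and that the bulk of the work lies in the combinatorial counting argument that prevents the adversary from circumventing the forced mixing.
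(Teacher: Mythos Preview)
Your proposal isolates the right basic gadget (a set $X$ of $t$ apices together with tree-vertices realising all $2^t$ neighbourhoods in $X$) and the right starting observation (a part containing two leaves that differ in coordinate $i$ is red-adjacent to the part of $v_i$). But both the construction and the proof plan stop well short of what is actually needed.

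\textbf{The construction.} A single batch of $2^t$ leaves on a path is not the paper's graph and is almost certainly not strong enough. The paper takes $T$ to be the \emph{full $2^t$-ary tree of depth $f(t)$}, with $f(t)$ of order $2^{2^{\Theta(t)}}$, and repeats the ``children realise all $2^t$ neighbourhoods in $X$'' pattern at \emph{every} internal node. The enormous depth is not a technicality; it is the resource the lower-bound argument spends.

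\textbf{The proof.} You correctly observe that entropy plus a one-shot pigeonhole over the $t$ coordinates yields only about $2^t/t$, and you conjecture that amortising over the sequence, or using the extra red edges forced by the tree, will close the gap to $2^{(1-\varepsilon)t}$. Neither of these is the mechanism that works. The paper's argument runs as follows. Assume a $d$-sequence with $d\le 2^{(1-\varepsilon)t}$ and freeze it just before any child of the root is first contracted. Because those children are still singletons, any part touching $X$ would have $\ge 2^{t-1}$ red edges to them; hence every $x\in X$ stays a singleton throughout this prefix. A second pigeonhole shows that any internal node in a non-singleton part has $\ge 2^{\varepsilon t}$ children sharing a common part (property~$\mathscr P$). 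Propagating $\mathscr P$ downward yields, at the critical trigraph $F$, a root-to-leaf chain $v_0,\dots,v_{f(t)-2}$ of nodes still in singleton parts, each with a side-child $q_i$ whose subtree already contains a preleaf satisfying $\mathscr P$. This is exactly where the huge depth is cashed in: it produces roughly $f(t)$ such preleaves in pairwise-incomparable subtrees. Now the entropy step is used once, but in the opposite direction from yours: any part containing $\ge 2^{\varepsilon t}$ siblings is red-adjacent to at least $\varepsilon t$ of the (singleton) apices, so the number $|B|$ of such ``big'' parts is at most $2^{(1-\varepsilon)t}/\varepsilon$. By pigeonhole one $b_0\in B$ swallows $\gtrsim f(t)/|B|$ of the preleaf witnesses; climbing from those witnesses toward the $q_i$'s and repeatedly pigeonholing over the $\le d$ red neighbours manufactures a long sequence $b_1,b_2,\dots$ of \emph{distinct} non-singleton parts, each containing an internal node and hence (via $\mathscr P$) red-adjacent to something in $B$. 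Choosing $f(t)$ large enough forces some vertex of $B$ to have red degree exceeding $d$.

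So the missing ingredients are: (i) the deep $2^t$-ary tree, and (ii) the two-phase counting---first bounding $|B|$ via red edges \emph{into} $X$, then generating many red edges \emph{into} $B$ by climbing the tree. Your amortised-entropy plan over a path of $2^t$ leaves supplies neither, and I do not see how to push it past the $2^t/t$ barrier you already flagged.
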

The graph $G_{t,\varepsilon}$ has in particular treewidth at most $t+1$, grid number at most $t+2$, and oriented twin-width at most $t+1$.
Thus
\begin{compactitem}
\item $\tww(G_{t,\varepsilon}) > 2^{(1-\varepsilon)(\tw(G_{t,\varepsilon}) - 1)}$,
\item $\tww(G_{t,\varepsilon}) > 2^{(1-\varepsilon)(\gn(G_{t,\varepsilon}) - 2)}$, and 
\item $\tww(G_{t,\varepsilon}) > 2^{(1-\varepsilon)(\otww(G_{t,\varepsilon}) - 1)}$.
\end{compactitem}

Hence~\cref{thm:main} has the following consequences.
\begin{corollary}\label{cor:tw}
  For every small $\varepsilon >0$, there is a family $\mathcal F$ of graphs with unbounded twin-width such that for every $G \in \mathcal F$:
  $\tww(G) > 2^{(1-\varepsilon)(\tw(G) - 1)}$.
\end{corollary}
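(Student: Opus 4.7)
The plan is to derive this corollary immediately from \cref{thm:main}, the only non-trivial ingredient being the standard inequality $\tw(G) \leq \fvs(G) + 1$ relating treewidth to the minimum size of a feedback vertex set. Fix a small $\varepsilon > 0$; by shrinking it if necessary, assume $\varepsilon \leq 1/2$, as proving the statement for a smaller $\varepsilon$ only strengthens it. I would then take as witness family
\[
\mathcal{F} := \{\, G_{t,\varepsilon} \,:\, t \in \mathbb{N},\ t > 1/\varepsilon \,\},
\]
where $G_{t,\varepsilon}$ is the graph supplied by \cref{thm:main}.

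First I would check unboundedness of the twin-width on $\mathcal{F}$: by \cref{thm:main}, $\tww(G_{t,\varepsilon}) > 2^{(1-\varepsilon)t}$, and since $\varepsilon < 1$ this quantity tends to infinity with $t$. Next I would justify the treewidth bound. Let $S$ be a feedback vertex set of $G_{t,\varepsilon}$ with $|S| = t$; then $G_{t,\varepsilon} - S$ is a forest, hence has treewidth at most $1$. Starting from an optimal tree decomposition of $G_{t,\varepsilon} - S$ and adding the $t$ vertices of $S$ to every bag yields a valid tree decomposition of $G_{t,\varepsilon}$ of width at most $t+1$. Thus $\tw(G_{t,\varepsilon}) \leq t + 1$, i.e.\ $t \geq \tw(G_{t,\varepsilon}) - 1$.

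Combining the two gives, for every $G_{t,\varepsilon} \in \mathcal{F}$,
\[
\tww(G_{t,\varepsilon}) \;>\; 2^{(1-\varepsilon)t} \;\geq\; 2^{(1-\varepsilon)(\tw(G_{t,\varepsilon}) - 1)},
\]
which is exactly the desired inequality, completing the proof.

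The argument is essentially bookkeeping on top of \cref{thm:main}; there is no genuine obstacle, only the minor subtlety of choosing a single $\varepsilon$ valid for the entire family and invoking the elementary $\tw \leq \fvs + 1$ bound. The real content, of course, is entirely hidden in the construction of $G_{t,\varepsilon}$ and the twin-width lower bound asserted by \cref{thm:main}; once that theorem is granted, the corollary is an immediate relabelling.
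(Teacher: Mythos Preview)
Your proposal is correct and follows essentially the same route as the paper: define $\mathcal F$ as the family of $G_{t,\varepsilon}$ from \cref{thm:main}, invoke the standard $\tw(G)\le\fvs(G)+1$ bound (this is exactly the paper's \cref{lem:tw-ub}), and combine with the twin-width lower bound to get $\tww(G_{t,\varepsilon})>2^{(1-\varepsilon)t}\ge 2^{(1-\varepsilon)(\tw(G_{t,\varepsilon})-1)}$. The paper states this in one line after proving \cref{lem:tww}; your version merely spells out the unboundedness of twin-width and the tree-decomposition construction explicitly.
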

Up to multiplicative factors, it matches the known upper bound~\cite{Jacob22,twin-width1}, and essentially settles~\cref{q:tw}.
The following answers~\cref{q:mxn-gn}.

\begin{corollary}\label{cor:gn}
  For every small $\varepsilon >0$, there is a family $\mathcal F$ of graphs with unbounded twin-width such that for every $G \in \mathcal F$:
  $\tww(G) > 2^{(1-\varepsilon)(\gn(G) - 2)}$.
\end{corollary}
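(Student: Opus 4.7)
The plan is to read off Corollary~\ref{cor:gn} directly from Theorem~\ref{thm:main} together with the accompanying bulleted fact that $\gn(G_{t,\varepsilon}) \leqslant t + 2$. Concretely, I would fix a small $\varepsilon > 0$ and, for each integer $t > 1/\varepsilon$, let $G_{t,\varepsilon}$ be the graph produced by Theorem~\ref{thm:main}. Setting $\mathcal F := \{G_{t,\varepsilon} : t > 1/\varepsilon\}$, the unboundedness of twin-width along $\mathcal F$ is immediate from $\tww(G_{t,\varepsilon}) > 2^{(1-\varepsilon)t} \to \infty$.

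The substantive step is verifying $\gn(G_{t,\varepsilon}) \leqslant t+2$. Let $S$ be the feedback vertex set of size $t$ given by Theorem~\ref{thm:main}, so that $F := G_{t,\varepsilon} - S$ is a forest. I would enumerate the vertices of $S$ first (in any order) and then extend by a depth-first traversal of each component of $F$, placing the subtree rooted at each child of a node in a contiguous block. With this ordering, the adjacency matrix of $F$ has grid number at most~$2$: in any $3$-division of that submatrix, the middle row strip together with one of the two extreme column strips must land in rows and columns corresponding to vertices in disjoint DFS subtrees, producing an all-zero cell. Prepending the $t$ rows and $t$ columns indexed by $S$ can only enlarge the admissible division by at most $t$, so $\gn(G_{t,\varepsilon}) \leqslant t+2$.

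Putting the pieces together is then routine: from $t \geqslant \gn(G_{t,\varepsilon}) - 2$, substituting into the exponent of Theorem~\ref{thm:main} yields
\[
  \tww(G_{t,\varepsilon}) > 2^{(1-\varepsilon)t} \geqslant 2^{(1-\varepsilon)(\gn(G_{t,\varepsilon}) - 2)},
\]
which is the required inequality for every $G \in \mathcal F$.

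The main (and essentially only) obstacle is the grid-number computation on the forest $F$ with the DFS ordering; once that is in place, the corollary is a one-line exponent substitution. Since the deep content lives entirely in Theorem~\ref{thm:main}, Corollary~\ref{cor:gn} amounts to rephrasing that result in terms of grid number instead of feedback vertex number.
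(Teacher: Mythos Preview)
Your overall plan---take $\mathcal F = \{G_{t,\varepsilon} : t > 1/\varepsilon\}$, bound $\gn(G_{t,\varepsilon}) \le t+2$, and substitute into the exponent---is exactly right, but the grid-number step fails: a DFS preorder does \emph{not} give a tree adjacency matrix with grid number at most~$2$, and your justification (``the middle row strip together with an extreme column strip lands in disjoint DFS subtrees'') is simply false. Concretely, let $r$ have children $a_1,\dots,a_4$ and let each $a_i$ have one child $b_i$; in the DFS order $r,a_1,b_1,a_2,b_2,a_3,b_3,a_4,b_4$ the $3$-division $\{1,2,3\},\{4,5,6\},\{7,8,9\}$ of both rows and columns is a $3$-grid minor: cell $(R_2,C_3)$ uses the edge $a_3b_3$ at position $(6,7)$, cell $(R_3,C_2)$ its transpose, and the remaining cells are covered by root--child edges and by $ra_1,a_2b_2,a_4b_4$. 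The same phenomenon occurs in the actual tree $T$ of the construction: splitting rows and columns right after the first child's subtree and right after the third child of the root again yields nine non-zero cells, so your ordering already gives grid number at least~$3$ on the $V(T)\times V(T)$ block.

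The paper (Lemma~\ref{lem:gn}) uses a \emph{level} order instead---all leaves first, then the \prels, and so on up to the root. With this order every tree edge has its child endpoint before its parent endpoint, and the above-diagonal $1$-entries are monotone: if child$_1$ precedes child$_2$ then parent$_1$ precedes or equals parent$_2$. Hence no two above-diagonal $1$s are in strictly decreasing position, which (together with symmetry) forbids any three $1$s in strictly decreasing position and therefore any $3$-grid minor in the $V(T)\times V(T)$ block; adding back the $t$ rows and columns of $X$ gives $\gn(G_{t,\varepsilon}) \le t+2$. Your DFS order lacks precisely this monotonicity---already $(1,4)$ and $(2,3)$ form a decreasing pair above the diagonal in the toy example---so the argument cannot be salvaged without changing the ordering.
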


 The following answers~\cref{q:otww}.
\begin{corollary}\label{cor:otww}
  For every small $\varepsilon >0$, there is a family $\mathcal F$ of graphs with unbounded twin-width such that for every $G \in \mathcal F$:
  $\tww(G) > 2^{(1-\varepsilon)(\otww(G) - 1)}$.
\end{corollary}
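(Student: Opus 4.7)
The plan is to deduce \cref{cor:otww} from \cref{thm:main} by establishing the structural bound that any graph $G$ with a feedback vertex set of size $t$ satisfies $\otww(G) \leq t + 1$. Granting this and setting $\mathcal F = \{G_{t,\varepsilon} : t > 1/\varepsilon\}$, the inequality $\otww(G_{t,\varepsilon}) - 1 \leq t$ substituted into $\tww(G_{t,\varepsilon}) > 2^{(1-\varepsilon)t}$ yields the desired $\tww(G_{t,\varepsilon}) > 2^{(1-\varepsilon)(\otww(G_{t,\varepsilon}) - 1)}$, and unboundedness of $\tww$ on $\mathcal F$ is immediate from \cref{thm:main}.

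To prove the structural bound, fix a feedback vertex set $S$ of $G$ of size $t$ and let $F := G - S$, which is a forest. I would build an oriented contraction sequence in two phases: phase one contracts $F$ into a single supernode while never touching $S$, and phase two contracts that supernode together with the $t$ vertices of $S$. Phase two is immediate, since only $t + 1$ vertices remain and each contraction therefore yields a supernode with at most $t$ outgoing red arcs. The difficulty is phase one.

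For phase one, I would root each tree $T$ of $F$ and recurse: assume each subtree $T_c$ rooted at a child $c$ of the current root $r$ has been processed into a single supernode $u_c$. All these $u_c$'s share $r$ as their sole forest-neighbor, so I would pair them in a binary-tournament fashion and finally merge the resulting supernode with $r$. The invariant maintained is that each forest-supernode has at most one outgoing red arc within the forest; it is preserved under pairing because two completed siblings share $r$ as a common neighbor, so their contraction creates no new forest-internal red arc, merely merging any pre-existing arcs to $r$ into one. Once each tree is reduced to a single supernode, merging supernodes coming from different trees creates no new forest-internal red arcs at all. Throughout phase one, a forest-supernode can also accumulate at most $|S| = t$ outgoing red arcs toward $S$, giving total out-red-degree at most $t + 1$; vertices of $S$ only receive red arcs.

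The main obstacle is controlling the forest-internal red degree. The naive approach of absorbing children into their parent one at a time is doomed: merging $u_{c_1}$ into $r$ immediately produces outgoing red arcs from the new supernode to all yet-unprocessed children $c_2, \ldots, c_k$ of $r$, which is unbounded in the maximum degree. The pair-completed-subtrees trick circumvents this because two completed siblings are effectively ``twins'' with respect to their common parent $r$ and the rest of the forest.
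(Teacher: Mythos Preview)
Your proposal is correct and follows the same strategy as the paper's \cref{lem:otww}: contract the tree (or forest) via an oriented $1$-sequence while leaving the feedback vertex set untouched, so that each forest-supernode has at most one outgoing forest-internal red arc plus at most $t$ outgoing arcs toward the deleted set, and then finish on the remaining $t+1$ vertices. The only differences are that the paper argues just for the specific graph $G_t$ and invokes the known tree $2$-sequence of~\cite{twin-width1} as an oriented $1$-sequence, whereas you state the general bound $\otww(G)\le\fvs(G)+1$ and spell out the sibling-pairing construction by hand---these are presentational rather than substantive.
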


The following answers~\cref{q:apex}.
\begin{corollary}\label{cor:apex}
  For every small $\varepsilon >0$, there is a family $\mathcal F$ of graphs with unbounded twin-width such that for every $G \in \mathcal F$:
  $\tww(G) > (2-\varepsilon)\tww(G-\{v\})$, where $v$ is a single vertex of $G$.
\end{corollary}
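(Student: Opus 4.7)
The plan is to combine \cref{thm:main} with the elementary apex bound $\tww(G) \le 2\tww(G-v) + 1$ (recalled in the paragraph introducing \cref{q:apex}) via a pigeonhole argument along the chain of deletions through the feedback vertex set. Given $\varepsilon > 0$, I would first pick an auxiliary parameter $\varepsilon' > 0$ small enough that $\rho := 2^{1-\varepsilon'}/(2-\varepsilon) > 1$; this is possible since $\log_2(2-\varepsilon) < 1$, and for concreteness one can take $\varepsilon' := \tfrac{1}{2}(1 - \log_2(2-\varepsilon))$. For each large enough $t$, invoke \cref{thm:main} to obtain $G_{t,\varepsilon'}$ with feedback vertex set $\{v_1,\ldots,v_t\}$ and $\tww(G_{t,\varepsilon'}) > 2^{(1-\varepsilon')t}$. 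Set $G^{(i)} := G_{t,\varepsilon'} - \{v_1,\ldots,v_i\}$ and $d_i := \tww(G^{(i)})$. Then $d_0 > 2^{(1-\varepsilon')t}$, while $G^{(t)}$ is a forest and hence $d_t \leq 2$.

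The core of the proof is a pigeonhole argument on the nonincreasing sequence $d_0 \geq d_1 \geq \cdots \geq d_t$ (monotonicity of twin-width under induced subgraphs). If it were the case that $d_i \leq (2-\varepsilon) d_{i+1}$ for every $i \in \{0,\ldots,t-1\}$, then by iteration $d_0 \leq 2(2-\varepsilon)^t$, which for $t$ large enough contradicts $d_0 > 2^{(1-\varepsilon')t}$ by the choice of $\varepsilon'$ (since $2^{(1-\varepsilon')t}/(2-\varepsilon)^t = \rho^t \to \infty$). Therefore there exists a smallest index $i^* \in \{0,1,\ldots,t-1\}$ with $d_{i^*} > (2-\varepsilon) d_{i^*+1}$. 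I would then set $G := G^{(i^*)}$ and $v := v_{i^*+1}$; by construction, $\tww(G) = d_{i^*} > (2-\varepsilon) d_{i^*+1} = (2-\varepsilon)\tww(G-v)$, exactly the inequality required.

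To conclude that the family $\{G^{(i^*(t))}\}_t$ indeed has unbounded twin-width (so that the statement is not vacuous for large parameters), I would use the minimality of $i^*$: for every $j < i^*$ one has $d_j \leq (2-\varepsilon) d_{j+1}$, so $d_0 \leq (2-\varepsilon)^{i^*} d_{i^*} \leq (2-\varepsilon)^t d_{i^*}$, which yields $d_{i^*} \geq d_0 / (2-\varepsilon)^t > \rho^t \to \infty$. The main obstacle is purely quantitative and lies in calibrating $\varepsilon$ and $\varepsilon'$ so that the exponential lower bound $2^{(1-\varepsilon')t}$ on $\tww(G_{t,\varepsilon'})$ strictly outpaces what $t$ successive factor-$(2-\varepsilon)$ increases can produce; once the constant $\rho > 1$ is secured, both the existence of $i^*$ and the divergence of $d_{i^*}$ follow from the same geometric-series computation.
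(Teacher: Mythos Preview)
Your proof is correct and follows essentially the same route as the paper: both argue by pigeonhole along the chain of deletions through the feedback vertex set~$X$, noting that the total multiplicative gain in twin-width from the tree~$T$ (twin-width~$2$) to~$G_{t,\varepsilon'}$ exceeds~$2^{(1-\varepsilon')t}/2$, so at least one single-vertex deletion must account for a factor arbitrarily close to~$2$. The paper phrases this as a geometric-mean bound (``one apex in~$X$ multiplies the twin-width by at least $2^{1-\varepsilon'-1/t}$'') while you phrase it as a contradiction with all ratios being at most~$2-\varepsilon$; these are the same argument, and your additional use of the minimality of~$i^*$ to certify that the resulting family has unbounded twin-width is a welcome detail that the paper leaves implicit.
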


We leave as an open question if the twin-width upper bound in oriented twin-width and mixed number can be made single-exponential.
 


\section{Preliminaries}\label{sec:prelim}

For $i$ and $j$ two integers, we denote by $[i,j]$ the set of integers that are at least $i$ and at most~$j$.
For every integer $i$, $[i]$ is a shorthand for $[1,i]$.
We use the standard graph-theoretic notations: $V(G)$ denotes the vertex set of a graph $G$, $E(G)$ denotes its edge set, $G[S]$ denotes the subgraph of $G$ induced by $S$, etc.
 
We give an alternative approach to contraction sequences.
The \emph{twin-width} of a graph, introduced in~\cite{twin-width1}, can be defined in the following way (complementary to the one given in introduction).
A~\emph{partition sequence} of an $n$-vertex graph $G$, is a sequence $\mathcal P_n, \ldots, \mathcal P_1$ of partitions of its vertex set $V(G)$, such that $\mathcal P_n$ is the set of singletons $\{\{v\}~:~v \in V(G)\}$, $\mathcal P_1$ is the singleton set $\{V(G)\}$, and for every $2 \leqslant i \leqslant n$, $\mathcal P_{i-1}$ is obtained from $\mathcal P_i$ by merging two of its parts into one.
Two parts $P, P'$ of a same partition $\mathcal P$ of $V(G)$ are said \emph{homogeneous} if either every pair of vertices $u \in P, v \in P'$ are non-adjacent, or every pair of vertices $u \in P, v \in P'$ are adjacent.
Two non-homogeneous parts are also said \emph{red-adjacent}.
The \emph{red degree} of a part $P \in \mathcal P$ is the number of other parts of $\mathcal P$ which are red-adjacent to~$P$. 
Finally the twin-width of $G$, denoted by $\tww(G)$, is the least integer $d$ such that there is a partition sequence $\mathcal P_n, \ldots, \mathcal P_1$ of $G$ with every part of every $\mathcal P_i$ ($1 \leqslant i \leqslant n$) having red degree at most~$d$.

The definition of the previous paragraph is equivalent to the one given in introduction, via contraction sequences.
Indeed the trigraph $G_i$ is obtained from partition $\mathcal P_i$, by having one vertex per part of $\mathcal P_i$, a black edge between any fully adjacent pair of parts, and a red edge between red-adjacent parts. 
A \emph{partial contraction sequence} is a sequence of trigraphs $G_n, \ldots, G_i$, for some $i \in [n]$.
A (full) \emph{contraction sequence} is one such that $i=1$.
We naturally consider the trigraph $G_j$ to come \emph{after} (resp.~\emph{before}) $G_{j'}$ if $j<j'$ (resp.~$j>j'$).
Thus when we write \emph{the first trigraph of the sequence $\mathcal S$ to satisfy X} (or \emph{the first time a~trigraph of $\mathcal S$ satisfies X}) we mean the trigraph $G_j$ with largest index $j$ among those satisfying X.
The same goes for partition sequences.

If $u$ is a vertex of a trigraph $H$, then $u(G)$ denotes the set of vertices of $G$ eventually contracted into $u$ in $H$.
We denote by $\P_G(H)$ (and $\P(H)$ when $G$ is clear from the context) the partition $\{u(G) : u \in V(H)\}$ of $V(G)$.
We may refer to a \emph{part} of $H$ as any set in $\{u(G) : u \in V(H)\}$.
We may also refer to a \emph{part} of a contraction/partition sequence as any part of one its trigraphs/partitions.
A contraction \emph{involves} a vertex $v$ if it produces a new part (of size at least~2) containing $v$.
In general, we use trigraphs and partitioned graphs somewhat interchangeably, when one notion appears more convenient than the other. 

\section{Proof of~\cref{thm:main}}

We fix once and for all, $0 < \varepsilon \leqslant 1/2$, a possibly arbitrarily small positive real.
We build for every integer $t > 1/\varepsilon$, a graph $G_{t,\varepsilon}$, that we shorten to $G_t$.
We set $$f(t)=\left\lceil 2+C_t 2^{(1-\varepsilon)t(2+C_t (2^{(1-\varepsilon)t}+1))}\right\rceil$$ where $C_t = 2^{(1-\varepsilon)t}/\varepsilon$.

\medskip
 
 \textbf{Construction of $G_t$.}
 Let $T$ be the full $2^t$-ary tree of depth~$f(t)$, i.e., with root-to-leaf paths on $f(t)$ edges.
 Let $X$ be a set of $t$ vertices, that we may identify to $[t]$.
 The vertex set of $G_t$ is $X \uplus V(T)$.
 The edges of $G_t$ are such that $G[X]$ is an independent set, and $G[V(T)]=T$.
 The edges between $V(T)$ are $X$ are such that
 \begin{compactitem}
 \item the root of $T$ has no neighbor in $X$, and
 \item the $2^t$ children (in $T$) of every internal node of $T$ each have a distinct neighborhood in $X$. 
 \end{compactitem}
 Note that this defines a single graph up to isomorphism.
 By a slight abuse of language, we may utilize the usual vocabulary on trees directly on $G_t$.
 By \emph{root}, \emph{internal node}, \emph{child}, \emph{parent}, \emph{leaf} of $G_t$, we mean the equivalent in $T$.

 \medskip

 We start with this straightforward observation.
\begin{lemma}\label{lem:tw-ub}
 $G_t$ has treewidth at most $t+1$.
\end{lemma}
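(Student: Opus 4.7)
The plan is to exhibit an explicit tree decomposition of $G_t$ of width $t+1$. The idea is that $X$ is essentially a feedback vertex set of $G_t$ (it meets all cycles, since removing it leaves only the tree $T$), and a standard principle says that deleting $k$ vertices can decrease treewidth by at most $k$; since $T$ has treewidth $1$, this already gives the desired bound. I would rather give the decomposition explicitly to make the bound transparent.

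Take the decomposition tree to be $T$ itself. For the root $r$ of $T$, set the bag to be $X \cup \{r\}$; for every non-root node $v$ with parent $p(v)$, set the bag at $v$ to be $X \cup \{v, p(v)\}$. Every bag has size at most $t+2$, so the resulting width is at most $t+1$.

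Next I would verify the three standard properties of a tree decomposition. Every vertex of $G_t$ occurs in some bag: vertices of $X$ lie in every bag, and each $v \in V(T)$ lies in its own bag. Every edge of $G_t$ is covered: an edge of the form $uv$ with $u = p(v)$ in $T$ is covered by the bag at $v$; an edge between $x \in X$ and $v \in V(T)$ is covered by any bag at $v$ (or by the bag at a child of $v$, if $v$ is the root). Finally, the bags containing a fixed vertex induce a connected subtree of~$T$: vertices of $X$ appear in every bag, and for $v \in V(T)$ the bags containing $v$ are exactly those at $v$ and at its children in $T$, which form a star in $T$ centered at $v$, hence a connected subtree.

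There is essentially no obstacle here; the only slight care is in handling the root of $T$ (which has no parent, hence a bag of size only $t+1$) and in checking the connectedness condition for nodes of $T$. Both are immediate from the construction.
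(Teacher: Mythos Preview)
Your proof is correct and is essentially the same approach as the paper's: the paper simply invokes the standard inequality $\tw(G)\leq\fvs(G)+1$ with the feedback vertex set $X$, and your explicit decomposition (add $X$ to every bag of the width-$1$ decomposition of $T$) is precisely the usual construction behind that inequality. One minor remark: the case ``$v$ is the root'' in your edge-covering check is vacuous, since by construction the root has no neighbor in $X$.
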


\begin{proof}\label{lem:tw}
  The set $X$ is a feedback vertex set of $G_t$ of size $t$, thus $\tw(G_t) \leqslant \fvs(G_t)+1 \leqslant t+1$. 
\end{proof}

The following is the core lemma, which occupies us for the remainder of the section.
  \begin{lemma}\label{lem:tww}
   $G_t$ has twin-width greater than $2^{(1-\varepsilon)t}$.
  \end{lemma}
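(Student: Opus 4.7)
The plan is to assume for contradiction that $G_t$ admits a partition sequence with red degree at most $d := 2^{(1-\varepsilon)t}$ and derive a contradiction by combining a local homogeneity constraint on sibling-contractions with the enormous depth of $T$. First I would establish a local counting lemma. Fix a trigraph $H$ in the sequence, let $\mathcal P_X$ be the partition of $X$ induced by $H$, and consider a part $P$ of $H$ that contains several children $c_1, \dots, c_m$ of a common internal node $v$ of $T$. For any $X$-part $Q$ that is homogeneous with $P$, all of $c_1, \dots, c_m$ must see the whole $X$-content of $Q$ the same way (either all are adjacent to all of $Q \cap X$, or none is adjacent to any), so $N(c_i) \cap X$ can only vary on the union $R$ of the at most $d$ $X$-parts that are red-adjacent to $P$. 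In particular, as long as $\mathcal P_X$ is still the singleton partition of $X$, one has $|R| \leq d$ and $P$ can contain at most $2^d$ children of $v$; since $v$ has $2^t = 2^d \cdot 2^{\varepsilon t}$ children with pairwise distinct $X$-neighborhoods, those children must spread across at least $2^{\varepsilon t}$ distinct parts of $H$. An analogous, weaker bound applies after $\mathcal P_X$ has merged some singletons.

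Next I would locate a critical trigraph in the sequence. The partition $\mathcal P_X$ can coarsen at most $t$ times; in between such drops the local bound above is uniform. The definition of $f(t)$, with its nested tower, is engineered precisely so that between two consecutive drops of $\mathcal P_X$ a very large portion of $T$ must have been contracted. I would iterate the local bound up the tree: each internal node of the currently ``active'' level must spread its $2^t$ children over at least $2^{\varepsilon t}$ distinct parts; repeating this on its parent, grandparent, and so on, the number of parts needed to accommodate a single sufficiently deep subtree grows without bound. Matched against the tower in $f(t)$, a pigeonhole step locates one internal node $v$ whose children, even modulo the coarser $\mathcal P_X$, must be squeezed into strictly fewer parts than the local lemma allows, which is the sought contradiction.

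The main obstacle I anticipate is not the local lemma itself but the careful bookkeeping in the global argument: parts in a partition sequence can tangle $X$-vertices, internal nodes of $T$ from various depths, and their descendants in essentially arbitrary ways, so one cannot simply project the partition onto a single level of $T$. The hard part is to pin down the critical trigraph so that simultaneously (a) $\mathcal P_X$ is still fine enough for the local lemma to bite, and (b) enough of $T$ has been absorbed that pigeonhole over the $f(t)$ depths of $T$ forces many siblings of a common node into one part. The iterated, roughly one-tower-level-per-potential-drop-of-$\mathcal P_X$ definition of $f(t)$ is what makes this scheme go through even in the worst case where the adversary interleaves tree contractions with coarsenings of $\mathcal P_X$.
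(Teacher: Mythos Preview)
Your local counting lemma is miscalibrated in a way that breaks the whole scheme. You set $d := 2^{(1-\varepsilon)t}$ and then write $2^t = 2^d \cdot 2^{\varepsilon t}$; that identity would need $d = (1-\varepsilon)t$, not $d = 2^{(1-\varepsilon)t}$. With the correct $d$, the bound ``$P$ contains at most $2^{|R|}$ siblings of a common parent'' is vacuous: since $|X| = t \ll d$, one has $|R| \leq t$ always, and at most $2^t$ siblings fit --- which is all of them. A single part containing all $2^t$ children of one node $v$ has at most $t$ red edges to $X$, well below $d$; the red-degree constraint toward $X$ simply cannot force the children of $v$ into $2^{\varepsilon t}$ different parts, so your ``spread over at least $2^{\varepsilon t}$ parts'' conclusion does not follow.

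The paper obtains the $2^{\varepsilon t}$-siblings-in-one-part phenomenon from the \emph{tree} edges, not from $X$: once an internal node $v$ shares its part $P$ with any other vertex $u$, that $u$ is adjacent to at most one child of $v$, so the remaining $2^t-1$ children each contribute a red edge to $P$ or lie in $P$; pigeonhole on the $\leq d+1$ parts available then puts $\geq 2^{\varepsilon t}$ siblings together. The role of $X$ is complementary: first one shows that throughout the relevant prefix of the sequence every vertex of $X$ stays a singleton (so your ``$\mathcal P_X$ can coarsen $t$ times'' scenario never arises), and then a part holding $\geq 2^{\varepsilon t}$ siblings is red-adjacent to $\geq \varepsilon t$ of those singletons, which by double-counting caps the number of such sibling-heavy parts at $d/\varepsilon$. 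The contradiction finally comes from a climb-up construction along a root-to-leaf path, at a carefully chosen trigraph, that exhibits more than $d \cdot d/\varepsilon$ red edges incident to this small set; the depth $f(t)$ is tuned to this argument and is not a tower of height~$\Theta(t)$.
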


  \begin{proof}
  We assume, by way of contradiction, that $G_t$ admits a $d$-sequence with $d \leq 2^{(1-\varepsilon) t}$.
  We consider the partial $d$-sequence $\mathcal S$, starting at $G_t$, and ending right before the first contraction involving a child of the root.
  We first show that no vertex of $X$ can be involved in a contraction of $\mathcal S$.
  Note that it implies, in particular, that the root cannot be involved in a contraction of $\mathcal S$.
  
  \begin{claim}\label{clm:X}
   No part of $\mathcal S$ contains more than one vertex of $X$. 
  \end{claim}
  
  \begin{proofofclaim}
    Observe that, for every $i \neq j \in [t]$, there are $2^{t-1}$ sets of $2^{[t]}$ containing exactly one of $i, j$: $2^{t-2}$ only contain $i$, and $2^{t-2}$ only contain $j$.
    Recall now that by assumption, in every trigraph of $\mathcal S$, every child of the root is alone in its part.
    Thus a~part $P$ of $\mathcal S$ such that $|P \cap X| \geq 2$ would have red degree at least $2^{t-1} > 2^{(1-\varepsilon) t} \geq d$. 
  \end{proofofclaim}

  \begin{claim}\label{clm:X-T}
    No part of $\mathcal S$ intersects both $X$ and $V(T)$.
  \end{claim}
  
  \begin{proofofclaim}
    For the sake of contradiction, consider the first occurrence of a~part $P \supseteq \{x,v\}$ with $x \in X$ and $v \in V(T)$.
    Vertex $x$ is adjacent to half of the children of the~root, whereas $v$ is adjacent to at most one of them, or all of them (if $v$ is itself the~root).
    In~both cases, this entails at least $2^{t-1}-1$ red edges for $P$ towards children of the~root.
    If~$v$~is not a grandchild of the root, the~red degree of $P$ is at~least $2^{t-1}$.
    We thus assume that~$v$~is a~grandchild of the~root.
    
    As $t \geq 2$, there is a $y \in X \setminus \{x\}$.
    Let $v'$ be the child of $v$ whose neighborhood in $X$ is exactly $\{y\}$.
    This vertex exists since $f(t) \geq 3$.
    If $P$ contains $v'$, $P$ is also red-adjacent to $\{y\}$ (indeed a part, by \cref{clm:X}).
    If instead, $P$ does not contain $v'$, then $P$ is also red-adjacent to the part containing $v'$.
    
    Thus, in any case, the red degree of $P$ is at least $2^{t-1}> 2^{(1-\varepsilon) t} \geq d$.
  \end{proofofclaim}

  From Claims~\ref{clm:X} and \ref{clm:X-T}, we immediately obtain:
  \begin{claim}\label{clm:X-sing}
   Every part of $\mathcal S$ intersecting $X$ is a singleton.
  \end{claim}

  Crucial to the proof, we introduce two properties $\mathscr P$, and later $\mathscr Q$, on internal nodes $v \in V(T)$ in trigraphs $H \in \mathcal S$.
  Property $\mathscr P$ is defined by
  $$\mathscr P(v,H) = \text{\emph{``At least $2^{\varepsilon t}$ children of $v$ are in the same part of $\P(H)$.''}}$$
 
 We first remark that any internal node in a non-singleton part verifies $\mathscr P$.
 \begin{claim}\label{clm:large-children-batch}
   Let $H$ be any trigraph of $\mathcal S$ and $v$ be any internal node of $T$ whose part in $\P(H)$ is not a singleton.
   Then $\mathscr P(v,H)$ holds.
 \end{claim}

 \begin{proofofclaim}
  Let $P$ be the part of $v$ (i.e., the one containing $v$) in $\P(H)$, and $u \in P \setminus \{v\}$. 
  At least $2^t-1$ children of $v$ are not adjacent to $u$.
  Thus these $2^t-1$ vertices have to be in at most $d+1 \leq 2^{(1-\varepsilon) t}+1$ parts. These parts are part $P$, plus at most $d$ parts linked to $P$ by a red edge.
  Since $(2^{\varepsilon t}-1)(2^{(1-\varepsilon) t}+1) < 2^t-1$ (recall that $\varepsilon < 1/2$), one of these parts (possibly $P$) contains at least $2^{\varepsilon t}$ children of $v$. 
 \end{proofofclaim}

 As the merge of a singleton part $\{v\}$ with any other part does not change the intersections of parts with the set of children of $v$, we get a slightly stronger claim.  
  \begin{claim}\label{clm:large-children-batch2}
   Let $v$ be an internal node of $T$, and $H$ be the last trigraph of $\mathcal S$ for which $v$ is in a singleton part of $\mathcal P(H)$.
   Then $\mathscr P(v,H)$ holds.
  \end{claim}
 
 A~\emph{\prel} is an internal node of $T$ adjacent to a leaf, i.e., the parent of some leaves. 
 We obtain the following as a direct consequence of~\cref{clm:large-children-batch}.

\begin{claim}\label{claim:hereditary}
 In any trigraph $H \in \mathcal S$, any non-\prel internal node $v \in V(T)$ that verifies $\mathscr P(v,H)$ has at least $2^{\varepsilon t}$ children $u$ verifying $\mathscr P(u,H)$.
 \end{claim}

We define the property $\mathscr Q$ on internal nodes $v$ of $T$ and trigraphs $H \in \mathcal S$ by induction:
\begin{align*}
  \mathscr Q(v,H) =
    \begin{cases}
      \mathscr P(v,H)                          & \text{\emph{if v is a \prel, and otherwise}}\\
      \mathscr Q(u_1,H) \land \mathscr Q(u_2,H) & \text{\emph{for some pair}}~u_1 \neq u_2~\text{\emph{of children of v.}}
    \end{cases}       
\end{align*}
That is, $\mathscr Q$ is defined as $\mathscr P$ for \prels, and otherwise, $\mathscr Q$ holds when it holds for at least two of its children.
Observe that $\mathscr P$ and $\mathscr Q$ are monotone in the following sense:
If $\mathscr P(v,H)$ (resp.~$\mathscr Q(v,H)$) holds, then $\mathscr P(v,H')$ (resp.~$\mathscr Q(v,H')$) holds for every subsequent trigraph $H'$ of the partial $d$-sequence $\mathcal S$.
We may write that \emph{$v$ satisfies $\mathscr P$ (resp.~$\mathscr Q$) in $H$} when $\mathscr P(v,H)$ (resp.~$\mathscr Q(v,H)$) holds, and may add \emph{for the first time} if no trigraph $H' \in \mathcal S$ before $H$ is such that $\mathscr P(v,H')$ (resp.~$\mathscr Q(v,H')$) holds.  

 \begin{claim}\label{clm:p-implies-q}
 For any trigraph $H \in \mathcal S$ and internal node $v$ of $T$, $\mathscr P(v,H)$ implies $\mathscr Q(v,H)$.
 \end{claim}
 
\begin{proofofclaim}
  This is a tautology if $v$ is a \prel.
  The induction step is ensured by~\cref{claim:hereditary}, since $2^{\varepsilon t} \geq 2$.
\end{proofofclaim}

At the end of the partial $d$-sequence $\mathcal S$, we know, by~\cref{clm:large-children-batch2}, that at least one child of the root satisfies $\mathscr P$, hence satisfies $\mathscr Q$, by~\cref{clm:p-implies-q}.
Thus the first time in the partial $d$-sequence~$\mathcal S$ that $\mathscr Q(v,H)$ holds, for a trigraph $H \in \mathcal S$ and a child $v$ of the root, is well-defined. 
We call $F$ this trigraph, and $v_0$ a child of the root such that $\mathscr Q(v_0,F)$ holds.

We now find many nodes satisfying $\mathscr Q$ in $F$, whose parents form a vertical path of singleton parts.

\begin{claim} \label{claim:branches}
  There is a set $Q \subset V(T)$ of at least $f(t)-2$ internal nodes such that
  \begin{compactitem}
  \item for every $v \in Q$, $\mathscr Q(v,F)$ holds,
  \item the parent of any $v \in Q$ is in a singleton part of $\mathcal P(F)$, and
  \item and \emph{no} two distinct nodes of $Q$ are in an ancestor-descendant relationship.
  \end{compactitem}
\end{claim}

\begin{proofofclaim}
We construct by recurrence two sequences $(v_i)_{i\in [f(t)-2]}, (q_i)_{i\in [0,f(t)-3]}$ of internal nodes of $T$ such that for all $i \in [f(t)-2]$, $v_i$ is a child of $v_{i-1}$, $v_{i-1}$ is in a singleton part of $\P(F)$, and $v_{i-1}$ has a child $q_{i-1} \neq v_i$ for which $\mathscr Q(q_{i-1},F)$ holds.

Assume that the sequence is defined up to $v_i$, for some $i<f(t)-2$.
We will maintain the additional invariant that $v_i$ satisfies $\mathscr Q$ for the first time in $F$.
This is the case for $i=0$.

As $v_i$ is not a \prel, it satisfies $\mathscr Q$ for the first time when a second child of $v_i$ satisfies~$\mathscr Q$.
Let $v_{i+1}$ be this second child, and $q_i$ be the first child to satisfy~$\mathscr Q$ (breaking ties arbitrarily if both children satisfy~$\mathscr Q$ for the first time in $F$).
The vertex $v_{i+1}$ satisfies $\mathscr Q$ for the first time in $F$.
Thus our invariant is preserved.

For every $i \in [f(t)-2]$, $v_i$ is in a singleton part of $\P(F)$.
Indeed, by~\cref{clm:large-children-batch2}, if $v_i$ was not in a singleton part of $\P(F)$, $v_i$ would satisfy $\mathscr P$, hence $\mathscr Q$, in the trigraph preceding $F$; a~contradiction.

The set $Q$ can thus be defined as $\{q_i~:~i \in [0,f(t)-3]\}$.
We already checked that the first two requirements of the lemma are fulfilled.
No pair in $Q$ is in an ancestor-descendant relationship since the nodes of $Q$ are all children of a root-to-leaf path made by the $v_i$s (see~\cref{fig:sequence}). 
\end{proofofclaim}

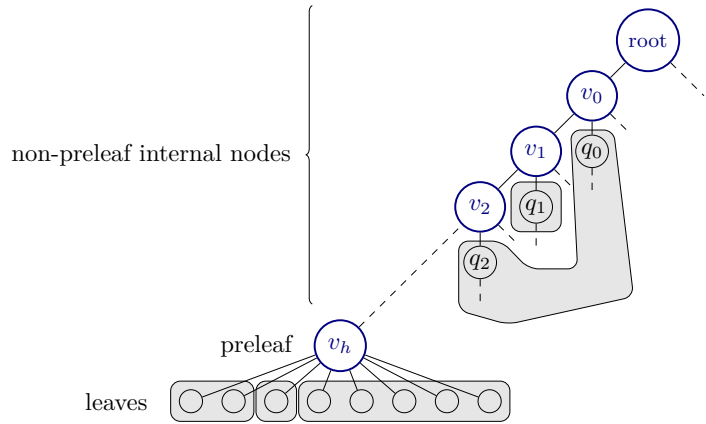
\begin{figure}[ht!]
  \centering
  \resizebox{280pt}{!}{
  \begin{tikzpicture}
    \def\s{0.8}
    \node[draw,thick,circle,darkblue] (r) at (\s,\s) {\footnotesize{root}} ;
    \node[circle,inner sep=0.03cm,] (w) at (2,0) {} ;

    \foreach \i in {0,...,2}{
      \node[draw,thick,circle,darkblue] (v\i) at (-\i * \s,-\s *\i) {$v_\i$} ;
      \node[draw,circle,inner sep=0.03cm,] (q\i) at (-\i * \s,-\s *\i-\s) {$q_\i$} ;
      \node[inner sep=0.3cm] (z\i) at (-\i * \s,-\s *\i-\s) {} ;
    }
    
    \foreach \i in {0,...,2}{
        \draw (v\i) -- (q\i) ;
        \draw[dashed] (v\i) --++(0.7 * \s,-0.7 * \s) ;
        \draw[dashed] (q\i) --++(0,-0.7 * \s) ;
    }
    \draw (v1) -- (v2) ;
    \draw (v0) -- (r) ;
    \draw (v1) -- (v0) ;
    \draw[dashed] (r) --++(\s,-\s) ;
    
    \node[draw,thick,circle, darkblue] (vi) at (-4.5 * \s,- 4.5 * \s) {$v_h$} ;
    \node at (-6 * \s,- 4.5 * \s) {\prel} ;
    \draw [draw, dashed] (vi) -- (v2) ;

    \foreach \i/\j in {1/-3.5,2/-2.5,3/-1.5,4/-0.5,5/0.5,6/1.5,7/2.5,8/3.5}{
      \node[draw,circle] (l\i) at (-4.5 * \s + \j * 0.61,- 5.5 * \s) {} ;
      \draw (l\i) -- (vi) ;
    }

    \foreach \i in {{(l4) (l8)},{(l3)},{(l1) (l2)},{(q1)}}{
    \node[draw,fill, fill opacity=0.1,rounded corners,fit=\i] () {} ;
    }
    
    \node at (-8.5*\s,-5.5*\s) {leaves} ;

    \draw[fill, fill opacity=0.1,rounded corners] (z2.north west) --++ (0,-1) --++ (0.7,-0.2) --++ (1.8,0.4) -- (z0.north east) -- (z0.north west) --++(0.1,-2) --++ (-0.7,0) -- (z2.north east) -- cycle ;

    \draw [decorate, decoration = {brace}] (-4,-3) -- (-4,\s + 0.5);
    \node at (-6.3,-1.25+0.5*\s) {non-\prel internal nodes} ;

  \end{tikzpicture}
  }
  \caption{The nodes $(v_i)_{i \in [0,h]}$ and $(q_i)_{[0,h-1]}$ ($h=f(t)-2$) satisfy $\mathscr P$ and $\mathscr Q$ in $F$. The $v_i$s and the root (nodes circled in blue) are in singleton parts of $F$. The other represented nodes can be in larger parts (shaded areas).}
  \label{fig:sequence}
\end{figure}

Let $B$ the vertices $w \in V(F)$ such that $w(G)$ contains at least $2^{\varepsilon t}$ children of the same node of $T$.
Each vertex of $B$ is red-adjacent to at least $\log(2^{\varepsilon t})=\varepsilon t$ (singleton) parts of $X$.
Therefore, since the red degree of (singleton) parts of $X$ is at most $2^{(1-\varepsilon) t}$: $$|B|\leq \frac{2^{(1-\varepsilon) t}}{\varepsilon}.$$
  
  Next we show that there is relatively large set of vertices of $F$ each corresponding to a~non-singleton part that contains an internal node of $T$.
\begin{claim}
  There is a set $B' \subseteq V(F)$ of size at least $$\frac{1}{(1-\varepsilon) t} \log\left(\frac{f(t)-2}{\abs{B}}\right)-1$$ such that for every $b \in B'$ there is an internal node $v$ of $T$ with $v \in b(G_t)$ and $\abs{b(G_t)} \geqslant 2$.
\end{claim}

\begin{proofofclaim}
  Let $s := \frac{1}{(1-\varepsilon) t} \log(\frac{f(t)-2}{\abs{B}})-1$.
  Our goal is to construct a sequence $(b_i)_{i \in [0,s]}$ of distinct vertices of $F$ such that for every $i \in [s]$,
  \begin{equation}\label{inv:1}
    \text{part}~b_i(G_t)~\text{is \emph{not} a singleton and contains an internal node of}~T.
  \end{equation}
  We first focus on finding $b_0$.
  Note that $b_0$ need not satisfy Invariant~(\ref{inv:1}), but will be chosen to force the existence of $b_1$ itself satisfying (\ref{inv:1}) and starting the induction.
  
  Let $Q := \{q_j~:~0 \leq j \leq f(t)-3\} \subset V(T)$ be as described in~\cref{claim:branches}. 
  Every $q_j \in Q$ has (at least) one descendant $q'_j$ that is a \prel and satisfies $\mathscr Q$, hence $\mathscr P$, in $F$.
  The $q'_j$s are pairwise distinct because no two nodes of $Q$ are in an ancestor-descendant relationship.
  We set $Q' := \{q'_j~:~0 \leq j \leq f(t)-3\}$.

  Now for every $q'_j$, at least $2^{\varepsilon t}$ of its children are in the same part of $\P(F)$; hence, this part corresponds to a vertex in $B$.
  By the pigeonhole principle, there is a $b_0 \in B$ that contains at least $2^{\varepsilon t}$ children of at least $(f(t)-2)/|B|$ nodes of $Q'$.

  For each $b_i$, we define $Q_i \subset Q$ as the set of vertices $q_j$ such that
  \begin{compactitem}
  \item $b_i(G_t)$ contains a (not necessarily strict) descendant $z$ of $q_j$, and
  \item no part $b_{i'}(G_t)$ with $i' < i$ contains a node on the path between $q_j$ and $z$ in $T$.
  \end{compactitem}
  Thus $|Q_0| \geq (f(t)-2)/|B|$.

  We now assume that $b_i \in V(F)$, for some $0 \leq i < s$, has been found with
  \begin{equation}\label{inv:2}
    |Q_i| \geq \frac{f(t)-2}{|B| \cdot 2^{i(1-\varepsilon)t}}.
  \end{equation}
  Observe that $Q_0$ satisfies~(\ref{inv:2}).
  We construct $b_{i+1}, Q_{i+1}$ satisfying the invariants~(\ref{inv:1}) and (\ref{inv:2}).

  For each $q_j \in Q_i$, consider the highest descendant $z_j$ of $q_j$ in $b_i(G_t)$, and $z'_j$ the parent of $z_j$ in $T$.
  By construction, the part $P_j$ of $\P(F)$ containing $z'_j$ is not a $b_k(G_t)$ for any $k \leq i$.
  Part $P_j$ is linked to $b_i(G_t)$ by a red edge.
  Therefore there are at most $2^{(1-\varepsilon) t}$ such parts $P_j$.
  In particular, there is a $b_{i+1} \in V(F)$ such that $b_{i+1}(G_t)$ contains at least
  $$\frac{|Q_i|}{d} \geq \frac{f(t)-2}{|B| \cdot 2^{i(1-\varepsilon) t}} \cdot \frac{1}{2^{(1-\varepsilon) t}} = \frac{f(t)-2}{|B| \cdot 2^{(i+1)(1-\varepsilon) t}}$$
  parents $z'_j$ of highest descendants $z_j$.

  Remark that $b_{i+1}(G_t)$ has size at least two while $(f(t)-2)/(|B| \cdot 2^{(i+1)(1-\varepsilon) t})>1$, which holds since $i<s$.
  Thus $b_{i+1}(G_t)$ does not contain any parent $v_j$ of a $q_j$ (since the $v_j$s are in singleton parts).
  In particular, $|Q_{i+1}| \geq (f(t)-2)/(|B| \cdot 2^{(i+1)(1-\varepsilon) t})$, and $b_{i+1}, Q_{i+1}$ satisfy~(\ref{inv:1}) and (\ref{inv:2}).

  Finally, the set $B':=\{b_i~:~1 \leq i \leq s\}$ has the required properties.
\end{proofofclaim}

We can now finish the proof of the lemma.

  For every $b_i \in B'$, let $u_i \in b_i(G_t)$ be an internal node of $T$.
  As $b_i(G_t) \geq 2$, $u_i$ satisfies $\mathscr P$ in $F$.
  This implies that $b_i$ or a red neighbor of $b_i$ is in $B$.
  Therefore, the total number of red edges incident to a vertex of $B$ is at least $|B'|-|B|$.
  Thus there is a vertex in $B$ with red degree at least $(|B'|-|B|)/|B|$.
  This is a contradiction since

    $$\frac{|B'|-|B|}{|B|} = \frac{|B'|}{|B|}-1 \geq \left(\frac{1}{(1-\varepsilon) t} \log\left(\frac{f(t)-2}{\abs{B}}\right)-1\right) \cdot \frac{1}{|B|}-1 $$
  $$ \geq \left(\frac{1}{(1-\varepsilon) t} \log\left(2^{(1-\varepsilon) t(2+C_t \cdot (2^{(1-\varepsilon) t}+1))}\right)-1\right) \cdot \frac{1}{|B|}-1 $$
  $$ = \left((2+C_t \cdot (2^{(1-\varepsilon) t}+1)) -1\right) \cdot \frac{1}{|B|}-1 > 2^{(1-\varepsilon) t}+1 - 1 = 2^{(1-\varepsilon) t} \geq d.$$
    
    since, we recall, $f(t)=\left\lceil2+C_t \cdot 2^{(1-\varepsilon) t(2+C_t \cdot (2^{(1-\varepsilon) t}+1))}\right\rceil$ and $C_t = \dfrac{2^{(1-\varepsilon) t}}{\varepsilon} \geq |B|$.
\end{proof}

Since $X$ is a feedback vertex set of size $t$ of $G_t$, \cref{lem:tww} implies~\cref{thm:main}, and hence~\cref{cor:tw}.

As the twin-width of $T$ is 2, adding the $t$ apices in $X$, multiplies the twin-width by at~least $2^{t(1-\varepsilon-\frac{1}{t})}$.
Thus one apex in $X$ multiplies the twin-width by at~least $2^{1-\varepsilon-\frac{1}{t}}$, which can be made arbitrarily close to 2.
This establishes~\cref{cor:apex}.

\section{Oriented twin-width and grid number}

In this section, we check that $G_t$ has oriented twin-width at most $t+1$, and grid number at~most~$t+2$.

A~\emph{(partial) oriented contraction sequence} is defined similarly as a \emph{(partial) contraction sequence} with every red edge replaced by a red arc leaving the newly contracted vertex.
Then a \emph{(partial) oriented $d$-sequence} is such that all the vertices of all its \emph{ditrigraphs} have at~most $d$ out-going red arcs.
The~\emph{oriented twin-width} of a graph $G$, denoted by $\otww(G)$, is the~minimum integer $d$ such that $G$ admits an oriented $d$-sequence.

\begin{lemma}\label{lem:otww}
 The oriented twin-width of $G_t$ is at most $t+1$.
\end{lemma}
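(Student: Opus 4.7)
The plan is to describe a bottom-up oriented contraction sequence whose out-going red degree never exceeds $t+1$. Throughout, I maintain the invariant that at every intermediate step each already-processed subtree of $T$ has been collapsed to a single super-vertex whose only out-going red arcs go to a subset of $X$ together with at most one extra target, namely the parent in $T$ of the subtree's root. All not-yet-touched tree nodes and all vertices of $X$ remain singletons, so their out-going red degree stays at $0$; any red arcs incident to them are incoming and, by definition of oriented twin-width, do not contribute.

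First, for each pre-leaf $p$ I merge its $2^t$ children one at a time. After $k$ merges the combined vertex has out-going red arcs only to the $x \in X$ on which the $k$ merged leaves disagree, so the out-degree never exceeds $t$. Once all $2^t$ leaves are absorbed the combined vertex has a red arc to every element of $X$, since the children of $p$ realize all $2^t$ subsets of $X$ as neighborhoods. I then merge the result with $p$ itself: the only new disagreement is with the parent of $p$ in $T$, pushing the out-degree to exactly $t+1$.

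The rest is induction up the tree. Assuming the invariant holds for every child of an internal non-pre-leaf node $v$, each child subtree has been collapsed to a super-vertex $C_i$ whose out-going red arcs are contained in $X \cup \{v\}$. I merge $C_1, \ldots, C_{2^t}$ one after the other; since distinct child subtrees of $v$ are vertex-disjoint in $T$ and send no edge between them in $G_t$, and since the $C_i$'s share their red targets, no new red arc appears during these merges, so the out-degree stays at $t+1$. I then merge the result with $v$: the red arc to $v$ is consumed and a new one is created towards the parent of $v$ in $T$, preserving the bound. At the root (which has no neighbor in $X$ and no parent) the out-degree drops to at most $t$, and absorbing the elements of $X$ one by one only decreases it further.

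The main point that needs careful checking is the merging of two sibling super-vertices without creating an unexpected red arc; this reduces to three observations: their underlying parts are vertex-disjoint subtrees of $T$, neither meets $X$, and they already disagree on every element of $X \cup \{v\}$. The essential role of the orientation is visible here: while the $X$-vertices and the currently-active parent nodes accumulate incoming red arcs from many simultaneously live super-vertices, these arcs are oriented away from the contracted side and so do not count towards the $t+1$ bound, which is exactly what makes the bound $\otww(G_t) \leq t+1$ attainable while $\tww(G_t)$ is exponential in $t$.
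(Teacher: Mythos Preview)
Your proof is correct and follows essentially the same approach as the paper's: contract $T$ to a single vertex via a tree sequence whose outgoing red degree within $T$ is at most $1$, so that the at most $t$ additional out-arcs towards $X$ give the bound $t+1$. The paper simply cites that the standard $2$-sequence for trees is an oriented $1$-sequence and leaves the rest implicit, whereas you spell out a concrete bottom-up realisation of that sequence and track the invariant explicitly; the underlying idea is the same.
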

\begin{proof}
  We observe that the 2-sequence for trees~\cite{twin-width1} is an oriented 1-sequence.
  We contract $T$ to a single vertex (without touching $X$) in that manner.
  This yields a partial oriented $t+1$-sequence for $G_t$ ending on a $t+1$-vertex ditrigraph, which can be contracted in any way.
  This contraction sequence witnesses that $\otww(G_t) \leq t+1$.
\end{proof}
Thus~\cref{cor:otww} holds.

\medskip

We finish by establishing~\cref{cor:gn}.
\begin{lemma}\label{lem:gn}
 The grid number of $G_t$ is at most $t+2$.
\end{lemma}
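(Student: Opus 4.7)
My plan is to present the bound in two stages: first a reduction peeling off the vertices of $X$ one at a time, and then a direct bound on the grid number of the tree $T$ alone.

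For the reduction, I will establish the general inequality $\gn(G \cup \{v\}) \leq \gn(G) + 1$ for any graph $G$ and additional vertex $v$. To see this, fix an ordering of $V(G)$ witnessing $\gn(G)$ and extend it to $V(G \cup \{v\})$ by placing $v$ arbitrarily. Given any $k$-grid minor of the extended adjacency matrix, let $i^*$ and $j^*$ be the indices of the row and column intervals containing $v$'s row and column respectively. Delete $R_{i^*}$ and $C_{j^*}$, absorbing any non-$v$ rows (resp.~columns) of these intervals into an adjacent interval so that the remaining $(k-1)$ intervals still partition the rows (resp.~columns) of $V(G)$ into consecutive blocks. Every one of the remaining $(k-1)^2$ cells still contains a $1$-entry inherited from the original minor, because the witnessing $1$-entries lay outside $v$'s row and column and therefore persist in the sub-matrix $M_G$. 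Hence $k - 1 \leq \gn(G)$. Iterating this inequality over the $t$ vertices of $X$ gives $\gn(G_t) \leq \gn(T) + t$.

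For the tree bound, I will order the vertices of $V(T)$ by a BFS traversal starting at the root, with the children of each node appearing as a consecutive block (in the sibling order fixed by the construction of $G_t$). I claim that $\gn(T) \leq 2$ under this ordering. The essential property is that in BFS order every tree edge connects vertices of \emph{consecutive} BFS levels, and in particular no edges lie within a single level. Assume for contradiction that a $3$-grid minor exists, with row intervals $R_1 < R_2 < R_3$ and column intervals $C_1 < C_2 < C_3$. Each interval spans a consecutive range of BFS levels, say $L(R_i) = [a_i, b_i]$ and $L(C_j) = [c_j, d_j]$, with BFS monotonicity $b_1 \leq a_2 \leq b_2 \leq a_3$ and $d_1 \leq c_2 \leq d_2 \leq c_3$. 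The non-emptiness of the corner cells $(R_1, C_3)$ and $(R_3, C_1)$ forces $c_3 \leq b_1 + 1$ and $a_3 \leq d_1 + 1$, pinching all six level endpoints into a window of at most three consecutive BFS levels. A finishing case analysis on how $R_2$ and $C_2$ partition this narrow band—combined with the block-bipartite "staircase" structure of edges between consecutive levels—then exhibits an empty off-diagonal cell, contradicting the assumption. Combining the two stages yields $\gn(G_t) \leq 2 + t = t+2$.

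The main obstacle is the finishing case analysis in the tree bound. While the level-pinching argument and the inductive reduction are both clean, turning the pinched configuration into an explicit empty cell—across the finitely many ways the six intervals can split the narrow band of consecutive levels, and correctly handling the fact that sibling blocks within a single level can be arbitrarily fragmented among row (or column) intervals—requires the most careful bookkeeping.
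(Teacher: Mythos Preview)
Your reduction $\gn(G \cup \{v\}) \leq \gn(G) + 1$ is correct and matches what the paper asserts (without proof) as ``the grid number of $M$ is at most $\gn(M_T)+t$''; your justification is a welcome addition.

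The tree part, however, is left genuinely incomplete. You set up a level-pinching argument and then defer to ``a finishing case analysis'' which you yourself flag as the main obstacle and do not carry out. Until that case analysis is written, the proof is not a proof. Moreover, the pinching already slips slightly: from $a_3 \le d_1+1$ and $c_3 \le b_1+1$ together with $|b_1-d_1|\le 1$ you get a window of up to \emph{four} consecutive levels, not three, which makes the promised bookkeeping heavier than advertised.

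The paper sidesteps all of this with a one-line observation that renders the case analysis unnecessary. Under the BFS ordering (the paper uses the reverse, leaves first up to the root, but this is immaterial), every edge of $T$ joins a vertex to its parent, and in the ordering parents appear in the same relative order as their blocks of children. Hence above the diagonal of $M_T$ the $1$-entries contain no \emph{decreasing pair}: there are no two $1$-entries $(r_1,c_1),(r_2,c_2)$ with $r_1<r_2$ and $c_1>c_2$. By symmetry the same holds below the diagonal, so $M_T$ contains no decreasing \emph{triple} of $1$-entries. But a $3$-grid minor would force such a triple via its anti-diagonal cells $(R_1,C_3),(R_2,C_2),(R_3,C_1)$. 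This replaces your entire level-pinching-plus-case-analysis programme by a two-sentence monotonicity argument; I recommend adopting it.
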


\begin{proof}
 Recall that $V(G_t) = X \uplus V(T)$.
 Let $\prec$ be the total order on $V(G_t)$ that puts first all the vertices of $X$ in any order, then from left to right, all the~leaves of $T$, followed by the~\prels, the nodes at depth $f(t)-2$, the nodes at depth $f(t)-3$, and so on, up to the~root.
 We denote by $M$ the adjacency matrix of $G_t$ ordered by $\prec$.

 Let $M_T$ be the submatrix of $M$ obtained by deleting the $t$ rows and $t$ columns corresponding to $X$.
 Note that the grid number of $M$ is at most $\gn(M_T)+t$.
 We claim that there is no 3-grid minor in $M_T$.
 
 Indeed, in the order $\prec$, above the diagonal of $M_T$ there is no pair of 1-entries in strictly decreasing positions.
 Thus overall there is no triple of 1-entries in strictly decreasing positions.
 Thus no 3-grid minor is possible in $M_T$.
\end{proof}

\end{document}